\newtheorem{thm}{Theorem} %[section]
\newtheorem{preremark}[thm]{Remark}
\newcommand{\R}{\mathbb R}
\title{A dispersive estimate for the multidimensional Burgers equation}
\date{\today}
\author{Luis Silvestre}
\thanks{LS was partially supported by NSF grants DMS-1254332 and DMS-1764285.}
\begin{document}
\begin{abstract}
We study the multi-dimensional Burgers equation $u_t + u u_{x_1} + \dots + u^d u_{x_d} = 0$. We prove that the $L^\infty$ norm of entropy solutions of this equation decays polynomially as $t \to \infty$ in terms of the $L^1$ norm of the initial data only.
\end{abstract}

\maketitle

% \section{Introduction}

We consider entropy solutions of the equation 
\begin{equation} \label{e:burgers}
 u_t + u u_{x_1} + \dots + u^d u_{x_d} = 0 \qquad \text{ for } t \in [0,\infty) \times x \in \R^d.
\end{equation}
We prove the following estimate.

\begin{thm}\label{t:main}
Let $u_0 \in L^\infty \cap L^1(\R^d)$ and $u \in L^\infty([0,\infty) \times \R^d) \cap C([0,\infty), L^1(\R^d)$ be the unique entropy solution to the equation \eqref{e:burgers} with initial data $u_0$. Then $u$ satisfies the estimate
\[ \|u(t,\cdot)\|_{L^\infty(\R^d)} \leq C \|u_0\|_{L^1(\R^d)}^{\gamma_0} t^{-d \gamma_0},\]
where $\gamma_0 = (1+ d (d+1)/2)^{-1}$. The constant $C$ depends on dimension only.
\end{thm}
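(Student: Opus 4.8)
The plan is to exploit the two structural features of \eqref{e:burgers}: its scaling symmetry and the geometry of the characteristic velocity field $a(v) = (v, v^2, \dots, v^d)$, which traces the moment curve in $\R^d$. First I would record the scaling symmetry. Writing the flux as $f_j(u)=u^{j+1}/(j+1)$, one checks that if $u$ solves \eqref{e:burgers}, then for any $a,b>0$ so does $u_{a,b}(t,x) = a\, u(bt,\, ba^{-1}x_1, ba^{-2}x_2, \dots, ba^{-d}x_d)$, the power $a^{-j}$ in the $j$-th slot being forced by $f_j$. A change of variables gives $\norm{u_{a,b}(t,\cdot)}_{L^\infty}=a\norm{u(bt,\cdot)}_{L^\infty}$ and $\norm{u_{a,b}(0,\cdot)}_{L^1}=a^{1+d(d+1)/2}b^{-d}\norm{u_0}_{L^1}$. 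Since $1/\gamma_0 = 1+d(d+1)/2$, the quantity $\norm{u(t,\cdot)}_{L^\infty}^{1/\gamma_0}\, t^{d}\,/\,\norm{u_0}_{L^1}$ is invariant under this two-parameter scaling, which both explains the exponents in Theorem \ref{t:main} and reduces the estimate to the single normalized inequality
\[ \norm{u_0}_{L^1(\R^d)} \;\gtrsim\; M^{\,1+d(d+1)/2}\, t^{d}, \qquad M:=\norm{u(t,\cdot)}_{L^\infty(\R^d)},\]
with an implicit constant depending only on $d$.

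The guiding heuristic for this inequality is a spreading, or anti-concentration, mechanism. Along characteristics the value $v$ of the solution is transported with velocity $a(v)$, and since the $j$-th component of $a$ separates values of size $\sim M$ at a rate $\sim M^{j}$, the part of the solution at height comparable to $M$ should occupy, at time $t$, a region of volume $\gtrsim \prod_{j=1}^d (M^{j} t)=t^{d}M^{d(d+1)/2}$. Multiplying by the height $\sim M$ and using that the $L^1$ norm is non-increasing, $\norm{u(t,\cdot)}_{L^1}\le\norm{u_0}_{L^1}$, reproduces the displayed inequality. Thus the crux is the rigorous lower bound on the volume of the top level set: for $\lambda=M/2$,
\[ \abs{\set{x\in\R^d : \abs{u(t,x)} > \lambda}} \;\gtrsim\; t^{d}\,\lambda^{d(d+1)/2}.\]
In one dimension this is exactly Oleinik's one-sided estimate $u_{x}\le 1/t$: it forces $u$ to stay above $M/2$ on an interval of length $\gtrsim Mt$ around the maximum, matching $d(d+1)/2=1$.

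The central step I would carry out is therefore to establish this anti-concentration bound, which I expect to be the main obstacle. The difficulty is that in the coupled system there is no clean pointwise one-sided estimate: differentiating \eqref{e:burgers} in a single direction $x_j$ produces a Riccati-type equation for $u_{x_j}$ along characteristics whose zeroth-order coefficient $\sum_k f_k''(u)\,u_{x_k}$ mixes all the directional derivatives, so the one-dimensional Oleinik argument does not close. To get around this I would work with a Lax–Oleinik type variational representation of the entropy solution, controlling the value at a point through the convex envelope $\hull{\cdot}$ of an associated potential and reading off the spreading of the top level set direction by direction from the non-degeneracy of the moment curve $v\mapsto a(v)$; an alternative is the kinetic formulation, where the same non-degeneracy drives a velocity-averaging gain. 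Once the volume lower bound is in hand, I would combine it with the $L^1$ decay—either by inserting $\lambda=M/2$ directly, or by integrating the bound against $d\lambda$ via the layer-cake identity $\norm{u(t,\cdot)}_{L^1}=\int_0^\infty\abs{\set{\abs{u(t,\cdot)}>\lambda}}\,\dd\lambda$—to deduce $t^{d}M^{1+d(d+1)/2}\lesssim\norm{u_0}_{L^1}$, and the theorem then follows by solving for $M$.
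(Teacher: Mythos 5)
Your scaling reduction is correct and worth keeping: the two-parameter family $u_{a,b}(t,x)=a\,u(bt,ba^{-1}x_1,\dots,ba^{-d}x_d)$ does preserve \eqref{e:burgers}, the exponent bookkeeping $\norm{u_{a,b}(0,\cdot)}_{L^1}=a^{1+d(d+1)/2}b^{-d}\norm{u_0}_{L^1}$ is right, and it legitimately reduces Theorem \ref{t:main} to the single normalized inequality $\norm{u_0}_{L^1}\gtrsim M^{1+d(d+1)/2}t^d$. But everything after that reduction is a statement of intent rather than a proof. The entire content of the theorem is concentrated in your claimed level-set bound $\abs{\set{x:\abs{u(t,x)}>M/2}}\gtrsim t^d M^{d(d+1)/2}$, which (together with the trivial $L^1$ monotonicity) is essentially \emph{equivalent} to the theorem; you do not prove it, you only name two tools that might prove it. Neither tool works as described: there is no Lax--Oleinik variational representation for a multidimensional scalar conservation law --- \eqref{e:burgers} is not the gradient of a Hamilton--Jacobi equation, so the convex-envelope formula you invoke simply does not exist in $d\geq 2$ --- and the kinetic formulation with velocity averaging is precisely the route that, in the existing literature, yields only non-sharp exponents and a constant depending on $\norm{u_0}_{L^\infty}$. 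Your heuristic volume count is also shakier than you suggest: the velocity set $\set{a(v):v\in[M/2,M]}$ is a one-dimensional curve whose coordinate extents are $\sim(M,M^2,\dots,M^d)$, and a curve transported for time $t$ does not by itself sweep out a box of volume $\prod_j(M^jt)$; extracting $d$-dimensional spreading from the one-dimensional moment curve is exactly the hard dispersive mechanism, not a consequence of it. Finally, as you note yourself, the one-dimensional Oleinik argument does not close because the Riccati system couples the directional derivatives --- so the one case where your outline is complete does not extend.

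The paper's actual proof sidesteps all of this. It takes as a black box the known (non-sharp) estimate \eqref{e:silvestre}, namely $\norm{u(t,\cdot)}_{L^\infty}\leq C\norm{u_0}_{L^\infty}^{\theta}\norm{u_0}_{L^1}^{\gamma}t^{-d\gamma}$ with $\theta=1-\gamma(1+d(d+1)/2)\in(0,1)$ for any fixed $\gamma<\gamma_0$, and bootstraps it: by the semigroup property one applies it on each dyadic interval to bound $\norm{u(t/2^j,\cdot)}_{L^\infty}$ in terms of $\norm{u(t/2^{j+1},\cdot)}_{L^\infty}^{\theta}$, and compounding the resulting geometric series sends the $L^\infty$ dependence to zero (exponent $\theta^{k+1}\to 0$) while the exponents $\gamma\sum\theta^j$ and $d\gamma\sum\theta^j$ converge to exactly $\gamma_0$ and $d\gamma_0$. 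If you want to salvage your approach, you must either prove the level-set lower bound by some genuinely new argument, or replace it with a citable quantitative input such as \eqref{e:silvestre}; in the latter case you will find you have reconstructed the paper's iteration.
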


The multi-dimensional Burgers equation \eqref{e:burgers} is the canonical example of a mutidimensional scalar conservation law that satisfies the genuine nonlinearity condition (originally stated in \cite{lions1994kinetic}). As such, it is often used as a test ground for regularity estimates for scalar conservation laws with the strongest possible genuine nonlinearity assumptions. See for example \cite{crippa2008regularizing}, \cite{serre} or \cite{silvestre2017oscillation}.

Very recently, Denis Serre proved in \cite{serre} the following estimate for \eqref{e:burgers}.
\begin{equation} \label{e:serre}
 \|u(t,\cdot)\|_{L^{(d+1)^2/d}} \leq C \|u_0\|_{L^1}^\gamma t^{-\delta}, 
\end{equation}
where
\[ \gamma = \frac{d^2+2d+2}{(1+d)(d^2+d+2)} \ \text{ and } \ \delta = \frac{2d (d^2+d+1)}{(1+d)^2 (d^2+d+2)}.\]
Note that the exponents in \cite{serre} are writen in terms of the space-time dimension $d+1$. Here, we express them in terms of the spacial dimension and that is why the formulas look different. Using this estimate, it is explained in \cite{serre} how one can extend the notion of entropy solutions for unbounded initial data $u_0 \in L^1$.

The estimate \eqref{e:serre} follows from Theorem \ref{t:main} by interpolation since $\|u(t,\cdot)\|_{L^1} \leq \|u_0\|_{L^1}$.

In \cite{silvestre2017oscillation}, we obtained the following result for general scalar conservation laws.
\begin{thm}[Theorem 1.5 in \cite{silvestre2017oscillation}] \label{t:silvestre} Let $u$ be the entropy solution of a genuinely nonlinear scalar conservation law $u_t + a(u) \cdot \nabla u = 0$. Then, there is an exponent $\gamma_0$ depending on the nonlinearity $a$ (see \cite{silvestre2017oscillation} for a specific expression) such that for all $\gamma \in (0,\gamma_0)$, there exists a constant $C = C(\gamma, d, \|u_0\|_{L^\infty})$ such that
\[ \|u(t,\cdot)\|_{L^\infty} \leq C \|u_0\|_{L^1}^\gamma t^{-d \gamma}.\]
\end{thm}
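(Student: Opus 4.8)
The plan is to reduce this $L^\infty$ decay estimate to a scale-invariant \emph{local boundedness} estimate for entropy solutions --- an estimate of De Giorgi type in which the genuine nonlinearity of $a$ supplies the gain --- and then to break its scale invariance using the conserved $L^1$ mass. The framework is the kinetic formulation of Lions, Perthame and Tadmor: with $\chi(v,w) = \one_{\{0<v<w\}} - \one_{\{w<v<0\}}$ and $f(t,x,v) = \chi(v,u(t,x))$, the entropy solution is characterized by
\[ \partial_t f + a(v)\cdot \nabla_x f = \partial_v m, \]
where $m \geq 0$ is the entropy-dissipation measure and $u(t,x) = \int_\R f(t,x,v)\dd v$. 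The maximum principle gives $\supp_v f(t,x,\cdot) \subseteq [-M,M]$ with $M = \|u_0\|_\infty$, and $L^1$ contraction against the zero solution gives $\|u(t,\cdot)\|_{L^1} \leq \|u_0\|_{L^1} =: L$. The crucial structural fact is that the equation is invariant under the amplitude-preserving rescaling $u(t,x) \mapsto u(\lambda t, \lambda x)$ (finite speed of propagation, since $\abs{a}$ is bounded on $[-M,M]$), under which space-time averages and $\|\cdot\|_\infty$ are preserved while $\|u(t,\cdot)\|_{L^1}$ scales like $\lambda^{-d}$.

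The first step is to quantify the genuine nonlinearity as nondegeneracy of the kinetic symbol: for unit covectors $(\tau,\xi)$ and every $\delta > 0$,
\[ \abs{\set{v \in [-M,M] : \abs{\tau + a(v)\cdot \xi} \leq \delta}} \leq C(M)\,\delta^\alpha \]
for some $\alpha = \alpha(a) > 0$, which is exactly where $M = \|u_0\|_\infty$ enters the constants. Because $f$ is bounded with compact velocity support and the source $\partial_v m$ is the $v$-derivative of a bounded measure, an averaging lemma in this rough regime furnishes a quantitative gain of integrability for the macroscopic average $u = \int_\R f \dd v$, with a gain exponent controlled by $\alpha$ and $d$. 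This gain plays the role that a Sobolev/energy inequality plays in the classical De Giorgi scheme.

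The technical heart, and the main obstacle, is to run a De Giorgi--Nash--Moser iteration on the kinetic equation, fed by this averaging gain, to obtain a scale-invariant local boundedness estimate of the form
\[ \|u\|_{L^\infty(Q_{1/2})} \leq C(M)\left(\abs{Q_1}^{-1}\int_{Q_1} \abs{u}\right)^{\mu} \]
on cylinders $Q_r$ adapted to the transport scaling, with a \emph{nonlinear} exponent $\mu = \mu(\alpha,d) \in (0,1)$ reflecting the strength of the nonlinear gain. The difficulties are genuine: the gain provided by averaging is only integral rather than pointwise, the source $\partial_v m$ is merely a measure, and --- since entropy solutions carry shocks --- one cannot hope for a modulus of continuity, so the correct target is an $L^\infty$ bound controlled by an $L^1$ average rather than any continuity statement. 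The supremum of the exponents $\mu$ attainable by the iteration is precisely $\gamma_0$, and the unavoidable loss at each stage of the iteration is what forces the strict inequality $\gamma < \gamma_0$.

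The final step converts the local bound into decay and is soft. Fix $t > 0$ and $x_0$, and apply the rescaled local estimate on a cylinder $Q_\ell(t,x_0)$ of size $\ell \sim t$ reaching up to time $t$, which fits inside $\set{s>0}$ by finite speed of propagation. Integrating the $L^1$ bound over the time-width gives $\int_{Q_\ell}\abs{u} \leq \ell L$, hence $\abs{Q_\ell}^{-1}\int_{Q_\ell}\abs{u} \lesssim L/\ell^{d} \sim L/t^{d}$, so that
\[ \abs{u(t,x_0)} \leq \|u\|_{L^\infty(Q_{\ell/2})} \leq C(M)\Big(\frac{L}{t^{d}}\Big)^{\mu} = C(M)\,\|u_0\|_{L^1}^{\mu}\,t^{-d\mu}. \]
Taking the supremum over $x_0$ and setting $\gamma = \mu$ yields $\|u(t,\cdot)\|_\infty \leq C(M)\,\|u_0\|_{L^1}^{\gamma}\,t^{-d\gamma}$ for every $\gamma < \gamma_0$, with $C = C(\gamma,d,\|u_0\|_\infty)$ obtained by tracking the dependence of all constants on $M$ through the genuine-nonlinearity modulus $C(M)$.
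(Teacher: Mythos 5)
A preliminary remark: this statement is not proved in the paper under review at all. It is quoted as Theorem 1.5 of \cite{silvestre2017oscillation} and used as a black box (more precisely, its scaled variant \eqref{e:silvestre}) to derive Theorem \ref{t:main}. So there is no internal proof to compare yours against; your proposal has to stand or fall on its own merits.

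On its own merits it is a program, not a proof. The scaffolding is sensible and broadly in the spirit of kinetic-formulation arguments: the Lions--Perthame--Tadmor formulation, the nondegeneracy expression of genuine nonlinearity (whose constants depend on $M=\|u_0\|_{L^\infty}$, which is indeed the source of the dependence of $C$ on $\|u_0\|_{L^\infty}$ in the statement), the hyperbolic scale invariance $u \mapsto u(\lambda t, \lambda x)$, and the final soft step are all coherent; in particular, granting the local estimate, the computation $|Q_\ell|^{-1}\int_{Q_\ell}|u| \lesssim \|u_0\|_{L^1}/\ell^{d}$ with $\ell \sim t$ does yield the claimed decay with $\gamma=\mu$. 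But the entire analytic content of the theorem is concentrated in the displayed local boundedness inequality $\|u\|_{L^\infty(Q_{1/2})} \leq C(M)\bigl(|Q_1|^{-1}\int_{Q_1}|u|\bigr)^{\mu}$ with a nonlinear exponent $\mu \in (0,1)$, and for this you give no proof: you correctly name it ``the technical heart,'' accurately list the reasons it is delicate (the averaging gain is only integral, the source $\partial_v m$ is merely a measure, shocks preclude any continuity statement), and then simply assert it. You likewise assert, with no derivation, that the supremum of attainable exponents $\mu$ coincides with the specific $\gamma_0$ of the statement, and you offer no mechanism explaining why the endpoint exponent is lost beyond saying the iteration loses something at each stage. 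Note that, given $L^1$ contraction and the scaling you describe, the local estimate and the decay estimate are essentially interchangeable; so what you have done is reduce the theorem to an unproven statement of equivalent difficulty. The missing De Giorgi-type iteration is not a detail to be filled in routinely --- it is the theorem.
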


In the case of the multidimensional Burgers equation, a more explicit form of the estimate can be obtained by scaling. This is done in Remark 8.4 in \cite{silvestre2017oscillation}. We obtain
\begin{equation} \label{e:silvestre}
 \|u(t,\cdot)\|_{L^\infty} \leq C(\gamma,d) \|u_0\|_{L^\infty}^{1 - \gamma(1+d(d+1)/2) } \|u_0\|_{L^1}^\gamma t^{-d \gamma},
\end{equation}
for any $\gamma < \gamma_0 := (1+d (d+1)/2)^{-1}$.

The estimate \eqref{e:silvestre} is not good enough to define entropy solutions with unbounded initial data as in \cite{serre} because of its dependence on $\|u_0\|_{L^\infty}$ on the right hand side. This drawback is pointed out in the comments after the main theorem in \cite{serre} together with the fact that the endpoint exponent $\gamma=\gamma_0$ is not reachable in \eqref{e:silvestre} since $C(d,\gamma)$ may not stay bounded as $\gamma \to \gamma_0$. In this short note, we show a quick argument to overcome both drawbacks and obtain Theorem \ref{t:main}. Our proof is based on the application of \eqref{e:silvestre} in dyadic subintervals of the form $(t/2^{j+1}, t/2^j)$ for $j=0,1,2,3,\dots$. Compounding all these inequalities, we obtain Theorem \ref{t:main}.

The advantage of the multidimensional Burgers equation \eqref{e:burgers} over general scalar conservation laws is that there is an explicit global two-parameter scaling that is used in Remark 8.4 in \cite{silvestre2017oscillation} to obtain \eqref{e:silvestre}. It is easy to work out a version of Theorem \ref{t:main} whenever a similar scaling property works (for example in the case of monomial equations as in Section 2.1 in \cite{serre}). It is currently unclear whether there is any form of Theorem \ref{t:main} that applies to more general scalar conservation laws. In particular, we do not know yet if the dependence on $\|u_0\|_{L^\infty}$ can be removed from the right hand side in Theorem \ref{t:silvestre} or if the decay exponent $\gamma = \gamma_0$ is reachable in general.

\noindent {\bf Acknowledgment.} I thank Denis Serre for pointing out this problem to me. This work started as a result of an email exchange between the author and Denis Serre after the paper \cite{serre} was completed.

\begin{proof}[Proof of Theorem \ref{t:main}]
Let $\gamma$ be any exponent in the range $(0,\gamma_0)$ so that \eqref{e:silvestre} applies. Let us define $\theta := 1 - \gamma (1+d(d+1)/2)$. Note that $\theta \in (0,1)$.

Because of the semigroup property, we can apply the estimate \eqref{e:silvestre} to estimate $\|u(t,\cdot)\|_{L^\infty}$ in terms of the $L^\infty$ and $L^1$ norms of $u(t_1,0)$ at any earlier time $t_1$. In particular, for $t_1 = t/2$ we obtain
\begin{align*} 
\|u(t,\cdot)\|_{L^\infty} &\leq C \left( \frac t2 \right)^{-d\gamma}
\|u(t/2,\cdot)\|_{L^1}^\gamma \|u(t/2,\cdot)\|_{L^\infty}^\theta , \\
&\leq C \left( \frac t2 \right)^{-d\gamma}
\|u_0\|_{L^1}^\gamma \|u(t/2,\cdot)\|_{L^\infty}^\theta
\end{align*}
We reapply \eqref{e:silvestre} to estimate $\|u(t/2,\cdot)\|_{L^\infty}$ in terms of $\|u(t/4,\cdot)\|_{L^\infty}$. Then we iteratively apply \eqref{e:silvestre} to estimate $\|u(t/2^j,\cdot)\|_{L^\infty}$ in terms of $\|u(t/2^{j+1},\cdot)\|_{L^\infty}$. In each step, we get
\begin{align*} 
\|u(t/2^j,\cdot)\|_{L^\infty} &\leq C t^{-d\gamma} 2^{(j+1) d \gamma}
\|u_0\|_{L^1}^\gamma \|u(t/2^{j+1},\cdot)\|_{L^\infty}^\theta
\end{align*}
Compounding all these inequalities, after $k+1$ iterations we are left with
\[ \|u(t,\cdot)\|_{L^\infty} \leq C^{\left( \sum_{j=0}^k \theta^j \right)}
\|u_0\|_{L^1}^{\left( \gamma \sum_{j=0}^k \theta^j \right)} t^{-\left(
d \gamma \sum_{j=0}^k \theta^j \right)} 2^{\left( d \gamma
\sum_{j=0}^k (j+1) \theta^j \right)} \|u(t/2^{k+1},\cdot)\|_{L^\infty}^{\left( \theta^{k+1} \right)}. \]
Since $\theta \in (0,1)$ and entropy solutions are bounded, we can pass to the limit $k \to \infty$ and obtain
\begin{align*} 
 \|u(t,\cdot)\|_{L^\infty} &\leq C^{1/(1-\theta)}
\|u_0\|_{L^1}^{\gamma / (1-\theta) } t^{-
d \gamma / (1-\theta) } 2^{ d \gamma / (1-\theta)^2}, \\
&\leq \tilde C \|u_0\|_{L^1(\R^d)}^{\gamma_0} t^{-d \gamma_0}.
\end{align*}
\end{proof}

\bibliographystyle{plain}
\bibliography{dispersive}

\end{document}